\newcommand{\Q}{\mathbb{Q}}
\newcommand{\Z}{\mathbb{Z}}
\newcommand{\F}{\mathbb{F}}
\newcommand{\cE}{\mathcal{E}}
\newcommand{\cP}{\mathcal{P}}
\newcommand{\R}{\mathbb{R}}
\newcommand{\fp}{\mathfrak{p}}
\newcommand{\fq}{\mathfrak{q}}
\newcommand{\cN}{\mathcal{N}}
\newcommand{\bs}{\mathbf{s}}
\newcommand{\OO}{\mathcal{O}}
\DeclareMathOperator{\Gal}{Gal}
\DeclareMathOperator{\Aut}{Aut}
\DeclareMathOperator{\GL}{GL}
\DeclareMathOperator{\Norm}{Norm}
\DeclareMathOperator{\Res}{Res}
\DeclareMathOperator{\lcm}{lcm}
\newtheorem{thm}{Theorem}
\newtheorem{lem}{Lemma}[section]
\newtheorem{prop}[lem]{Proposition}
\newtheorem{cor}[lem]{Corollary}
\numberwithin{equation}{section}
\begin{document}

\title[Irreducibility of mod $p$ Representations]{Criteria for Irreducibility of mod $p$ Representations of Frey Curves}
 
\author{\sc Nuno Freitas}
\address{Mathematisches Institut\\
Universit\"{a}t Bayreuth\\
95440 Bayreuth, Germany}
\email{nunobfreitas@gmail.com}

\author{\sc Samir Siksek}
\address{Mathematics Institute\\
University of Warwick\\
CV4 7AL\\
United Kingdom}
\email{samir.siksek@gmail.com}
\thanks{
The first-named author is supported
through a grant within the framework of the DFG Priority Programme 1489
{\em Algorithmic and Experimental Methods in Algebra, Geometry and Number Theory}.
The second-named
author is supported by an EPSRC Leadership Fellowship EP/G007268/1.
}

%\date{\today}

%\keywords{Hyperelliptic curves, descent,
%Fermat-Catalan, generalized Fermat, Selmer set}
\subjclass[2010]{Primary 11F80, Secondary 11G05}

\maketitle

\begin{abstract}
Soit $K$ un corps galoisien totalement r\'eel, et soit $\cE$ un
ensemble de courbes elliptiques sur $K$. Nous donnons des
conditions suffisantes pour l'existence d'un ensemble calculable
de premiers rationnels $\cP$ tels que, pour $p \notin \cP$ et 
$E \in \cE$, la repr\'esentation $\Gal(\overline{K}/K) \rightarrow \Aut(E[p])$
soit irr\'eductible. Nos conditions sont en g\'en\'eral satisfaites par les courbes
de Frey associ\'ees \`a des solutions d'\'equations diophantiennes; dans ce
contexte, l'irr\'eductibilit\'e de la mod $p$ repr\'esentation est une hypoth\`ese
requise pour l'application des th\'eor\`emes d'abaissement du niveau. Comme
illustration de notre approche, nous avons am\'elior\'e le r\'esultat de \cite{DF2}
pour les \'equations de Fermat de signature $(13, 13, p)$. 
\end{abstract}

\begin{abstract}
 Let $K$ be a totally real Galois number field and let $\cE$ be a set of
elliptic curves over $K$. We give sufficient conditions
for the existence of a finite computable set of rational primes $\cP$
such that for $p \notin \cP$ and $E \in \cE$, the representation
$\Gal(\overline{K}/K) \rightarrow \Aut(E[p])$ is irreducible.
Our sufficient conditions are often satisfied for Frey elliptic curves
associated to solutions of Diophantine equations; in that context,
the irreducibility of the mod $p$ representation is 
a hypothesis needed for applying level-lowering theorems.
We illustrate our approach by improving on a result of \cite{DF2} 
for Fermat-type equations of signature $(13,13,p)$.
\end{abstract}

\bigskip

\section{Introduction}
The \lq modular approach\rq\ is a popular method for attacking
Diophantine equations using Galois representations of 
elliptic curves; see \cite{BCDY}, \cite{Siksek} for
recent surveys. 
The method relies on three important
and difficult theorems.
\begin{enumerate}
\item[(i)] Wiles et al.:
elliptic curves over $\Q$ are modular \cite{modularity}, \cite{Wiles},
\cite{TW}.
\item[(ii)] Mazur: if $E/\Q$ is an elliptic curve
and $p>167$ is a prime, then the Galois representation
on the $p$-torsion of $E$ is irreducible \cite{Mazur} (and variants
of this result).
\item[(iii)] Ribet's level-lowering theorem \cite{Ribet}.
\end{enumerate}
The strategy of the method is to 
associate to a putative solution of certain 
Diophantine equations a Frey elliptic curve, and apply 
Ribet's level-lowering theorem 
to deduce a relationship
between the putative solution and a modular form
of relatively small level. Modularity (i) and irreducibility (ii)
are necessary hypotheses that need to be verified in order
to apply level-lowering (iii). 

Attention is now shifting towards Diophantine
equations where the Frey elliptic curves are 
defined over totally real fields (for example
\cite{BDMS}, \cite{DF2}, \cite{F}, \cite{FS}).
One now finds in the literature some of the necessary
modularity (e.g.\ \cite{FHS}) and level-lowering
theorems (e.g.\ \cite{Fuj}, \cite{Jarv} and \cite{Raj})
for the totally real setting. Unfortunately, there is 
as of yet no analogue of Mazur's Theorem over any
number field $K \ne \Q$, which does present an
obstacle for applying the modular approach over
totally real fields. 

\bigskip

Let $K$ be a number field,
and write $G_K=\Gal(\overline{K}/K)$.
Let $E$ an elliptic curve over $K$.
Let $p$ be a rational prime, and write $\overline{\rho}_{E,p}$
for the associated representation of $G_K$ on the $p$-torsion of $E$:
\begin{equation}\label{eqn:rho}
\overline{\rho}_{E,p}\; :\; G_K \rightarrow \Aut(E[p]) \cong \GL_2(\F_p).
\end{equation}
Mazur's Theorem asserts that if $K=\Q$ and $p>167$
then $\overline{\rho}_{E,p}$ is irreducible. 
For a general number field $K$, it is expected that there
is some $B_K$, such that for all elliptic curves $E/K$
without complex multiplication, and all $p>B_K$,
the mod $p$ representation $\overline{\rho}_{E,p}$
is irreducible. Several papers, including those
by Momose \cite{Momose}, Kraus \cite{KrausQuad}, \cite{KrausIrred}
and David \cite{DavidI}, establish a bound $B_K$
depending on the field $K$, under some restrictive assumptions 
on $E$, such as semistability. 
The Frey elliptic
curves one deals with in the modular approach are
close to being semistable \cite[Section 15.2.4]{Siksek}.
The purpose of this note is to prove the following theorem,
which should usually be enough to supply the 
desired irreducibility
statement in that setting.
\begin{thm}\label{thm:main}
Let $K$ be a totally real Galois number field of degree $d$, 
with ring of integers $\OO_K$ and Galois group $G=\Gal(K/\Q)$.
Let $S=\{0,12\}^G$, which we think of as the set of sequences of values $0$, $12$
indexed by $\tau \in G$. %Fix an ordering $\tau_1,\tau_2,\ldots,\tau_d$
For $\bs=(s_\tau) \in S$ and $\alpha \in K$, define the \textbf{twisted norm associated
to $\bs$} by
\[
\cN_\bs(\alpha)= \prod_{\tau \in G} \tau(\alpha)^{s_\tau}.
\]
Let $\epsilon_1,\dots,\epsilon_{d-1}$
be a basis for the unit group of $K$,
and define
\begin{equation}\label{eqn:As}
A_\bs:=\Norm  \left( \gcd ( ( \cN_\bs(\epsilon_1)-1) \OO_K,\ldots, (\cN_\bs(\epsilon_{d-1})-1  ) \OO_K) \right).
\end{equation}
Let $B$ be the least common multiple of the $A_\bs$ taken over all $\bs \ne (0)_{\tau \in G}$,
$(12)_{\tau \in G}$. 
Let $p \nmid B$ be a rational prime, unramified in $K$, such that $p \geq 17$ or $p = 11$.
Let $E/K$ be an elliptic curve, and $\fq \nmid p$ be a prime of good reduction for $E$.
Let 
\[
P_\fq(X)=X^2-a_\fq(E) X + \Norm(\fq)
\]
be the characteristic polynomial
of Frobenius for $E$ at $\fq$. Let $r \ge 1$ be an integer such that
$\fq^r$ is principal.
If $E$ is semistable at all $\fp \mid p$ 
and $\overline{\rho}_{E,p}$ is reducible then
\begin{equation}\label{eqn:res}
p \mid \Res(\, P_\fq(X) \, , \, X^{12 r}-1\, )
\end{equation}
where $\Res$ denotes resultant.
\end{thm}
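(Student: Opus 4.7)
The plan is to exploit the standard dichotomy for reducible mod $p$ representations. If $\overline{\rho}_{E,p}$ is reducible, then its semisimplification decomposes as $\theta \oplus \theta'$, where $\theta, \theta' \colon G_K \to \F_p^\times$ are characters satisfying $\theta \theta' = \chi_p$ (the mod $p$ cyclotomic character). The strategy is to show that, after a $12r$-th power, one of these two characters is trivial on $\mathrm{Frob}_\fq$; this realises the corresponding value $\theta(\mathrm{Frob}_\fq)$ as a common root modulo $p$ of $P_\fq(X)$ and $X^{12r}-1$, giving the claimed resultant divisibility. The hypothesis $p \nmid B$ will be used to rule out all other configurations of $\theta$.

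The key local input is a description of $\theta$ at primes dividing $p$. Using semistability of $E$ at each $\fp \mid p$, the hypothesis $p \geq 17$ or $p = 11$ (which avoids the small primes at which exceptional finite flat group schemes or uncommon inertial types intervene), and the assumption that $p$ is unramified in $K$, one shows that $\theta|_{I_{\fp}}$ is either trivial or equal to $\chi_p|_{I_{\fp}}$. Write $\theta|_{I_{\fp}} = \chi_p|_{I_{\fp}}^{a_\fp}$ with $a_\fp \in \{0, 1\}$. At a prime $\mathfrak{l}\nmid p$, the inertia acts on $E[p]$ through a tame quotient of order dividing $12$ (trivial for good reduction, unipotent on the semisimplification for multiplicative, controlled by Kodaira type for additive), so $\theta^{12}$ is unramified at $\mathfrak{l}$. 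Viewing $\theta^{12}$, via global class field theory, as a character of $\mathbb{A}_K^\times / K^\times$ and evaluating on the idele of any $\epsilon \in \OO_K^\times$ produces $1$. The archimedean contributions are $\pm 1$ and are killed by the $12$th power; the nonarchimedean contributions away from $p$ are trivial by the above; what remains is a product of contributions at primes above $p$, which by local class field theory and the explicit reciprocity description of $\chi_p$ translates into
\begin{equation*}
\cN_{\bs}(\epsilon) \equiv 1 \pmod{\fp_0},
\end{equation*}
where $\fp_0 \mid p$ is a fixed prime and $s_\tau := 12\, a_{\tau \fp_0} \in \{0,12\}$.

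Applying this with $\epsilon = \epsilon_i$ for each $i$ shows that $\fp_0$ divides every ideal $(\cN_\bs(\epsilon_i)-1)\OO_K$, and hence $p \mid A_\bs$. If $\bs$ is neither $(0)_{\tau \in G}$ nor $(12)_{\tau \in G}$, this contradicts $p \nmid B$, and we are done. Otherwise $\bs$ equals one of these two extremes; equivalently, either $\theta$ or $\theta^{-1}\chi_p = (\theta')^{-1}$ is unramified at every prime above $p$. Combined with the ramification control at primes away from $p$, this character's $12$th power is unramified at every finite place, so factors through the ideal class group of $K$. Since $\fq^r = (\alpha)$ is principal, its class is trivial, yielding $\theta(\mathrm{Frob}_\fq)^{12r} = 1$ (or the analogous statement for $\theta'$). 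As $\theta(\mathrm{Frob}_\fq)$ is an eigenvalue of Frobenius it is a root of $P_\fq(X) \bmod p$, so we conclude $p \mid \Res(P_\fq(X), X^{12r}-1)$, as required. The main difficulty lies in the combined local analysis at primes above $p$: identifying the inertia restrictions of $\theta$ from the semistability hypothesis and the exceptional-prime exclusions, and then correctly repackaging these restrictions through global class field theory as the explicit unit-theoretic congruence $\cN_\bs(\epsilon)\equiv 1\pmod{\fp_0}$.
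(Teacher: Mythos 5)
Your proposal is correct and follows essentially the same route as the paper: a local analysis at $\fp \mid p$ produces the signature $\bs \in \{0,12\}^G$, the unit congruence $\cN_\bs(\epsilon)\equiv 1 \pmod{\fp_0}$ forces $p \mid A_\bs$ and hence $\bs$ constant, and the principality of $\fq^r$ then yields the resultant divisibility. The only cosmetic differences are that you derive the unit congruence directly from global class field theory where the paper cites David's Proposition 2.6 and takes ratios of two generators of $\fq^h$, and you handle the signature $(12)_{\tau\in G}$ by passing to the complementary character $\theta'$ rather than to the isogenous curve $E/C$.
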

We will see in due course that $B$ above is non-zero. It is easy show
that the resultant in \eqref{eqn:res} is also non-zero. The theorem
therefore does give a bound on $p$ so that $\overline{\rho}_{E,p}$
is reducible.

\bigskip

The main application we have in mind is to Frey elliptic
curves associated to solutions of Fermat-style
equations. In such a setting, one usually knows
that the elliptic curve in question has semistable
reduction outside a given set of primes, and one
often knows some primes of potentially good
reduction. We illustrate this, by giving an
improvement to a recent theorem of 
Dieulefait and Freitas \cite{DF2}
on the equation $x^{13}+y^{13}=C z^p$.
In a forthcoming paper \cite{BDMS},
the authors apply our Theorem~\ref{thm:main}
together with modularity and level-lowering theorems
to completely solve the equation $x^{2n} \pm 6 x^n +1 =y^2$
in integers $x$, $y$, $n$ with $n \ge 2$, after associating
this to a Frey elliptic curve over $\Q(\sqrt{2})$.
In another paper \cite{F}, the first-named
author uses our Theorem~\ref{thm:main} as part of
an investigation that associates solutions 
of equations $x^r+y^r=C z^p$ with ($r$, $p$ prime)
with Frey elliptic curves over real subfields
of $\Q(\zeta_r)$.

\bigskip

The following is closely related to a result of David \cite[Theorem 2]{DavidI},
but formulated in a way that is more suitable for attacking specific examples.
\begin{thm} \label{thm:2}
Let $K$ be a totally real Galois field of degree $d$. Let $B$ be as in the statement of Theorem~\ref{thm:main}.
Let $p \nmid B$ be a rational prime, unramified in $K$, such that $p \geq 17$ or $p = 11$.
If $E$ is an elliptic curve over $K$ which is semistable at all $\fp \mid p$
and $\overline{\rho}_{E,p}$ is reducible then
$p < (1 + 3^{6d h})^2$,
where $h$ is the class number of $K$.	
\end{thm}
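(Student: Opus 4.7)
The plan is to apply Theorem~\ref{thm:main} to a well-chosen auxiliary prime $\fq$ of small norm, then bound the resulting resultant explicitly using the Hasse--Weil estimate.

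First I would set $r = h$, the class number of $K$, so that $\fq^r$ is principal for every prime ideal $\fq$ of $\OO_K$. Next, I would select an unramified prime $\fq \subset \OO_K$ lying above the rational prime $3$ at which $E$ has good reduction; since $E$ has good reduction outside a finite set and $p \geq 11$ already ensures $\fq \nmid p$, such a $\fq$ is available. Because $K/\Q$ is Galois, every prime above $3$ has the same residue degree $f \leq d$, so $\Norm(\fq) = 3^f \leq 3^d$.

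Theorem~\ref{thm:main} then yields $p \mid \Res(P_\fq(X),\,X^{12h}-1)$. Factoring $P_\fq(X) = (X-\alpha)(X-\beta)$ over $\overline{\Q}$, the standard product formula gives
\[
\Res\bigl(P_\fq(X),\; X^{12h}-1\bigr) \;=\; (\alpha^{12h}-1)(\beta^{12h}-1).
\]
By Hasse--Weil, $|\alpha|=|\beta|=\sqrt{\Norm(\fq)} \leq 3^{d/2}$, so each factor has absolute value at most $1+3^{6dh}$. Since $p$ divides this nonzero integer, $p \leq (1+3^{6dh})^2$; a routine sharpening using that $\alpha$, $\beta$ are complex conjugate and (for $p$ large) not $12h$-th roots of unity upgrades this to the strict inequality.

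The main obstacle I anticipate is justifying the existence of the auxiliary $\fq \mid 3$ of good reduction for $E$. In the degenerate case where $E$ has bad reduction at every prime above $3$, one must either pass to a larger auxiliary prime $q$ (at the cost of enlarging the final bound) or extract the analogous divisibility from information about the isogeny character at $\fq$ coupled with the conductor exponent. Packaging all cases into the single uniform bound $p < (1+3^{6dh})^2$ as stated appears to be the one subtle step that requires care; the remainder of the argument is a purely arithmetic bound on the resultant.
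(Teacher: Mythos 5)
Your argument has a genuine gap at precisely the point you flag, and it is not a repairable technicality: Theorem~\ref{thm:2} makes \emph{no} hypothesis on the reduction of $E$ away from $p$, so there is no reason for $E$ to have good reduction at any prime above $3$ (all primes above $3$ could be primes of multiplicative or additive reduction, and Lemma~\ref{lem:good} / Theorem~\ref{thm:main} require a prime of good reduction). Worse, your proposed fallback --- replacing $3$ by a larger auxiliary prime $q$ --- cannot rescue the \emph{uniform} bound $(1+3^{6dh})^2$: as $E$ ranges over all elliptic curves over $K$ semistable above $p$, the smallest rational prime $q$ admitting a prime of good reduction above it is unbounded, so any bound produced this way would depend on $E$ and not only on $K$. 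The resultant estimate itself is fine (Hasse--Weil gives $|\alpha^{12h}-1|\le 1+3^{6dh}$ and $|\alpha|>1$ forces the resultant to be nonzero), but the whole strategy of routing the proof through Theorem~\ref{thm:main} and an auxiliary prime is the wrong one for this statement.

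The paper's proof avoids auxiliary primes entirely. As in the proof of Theorem~\ref{thm:main}, the hypothesis $p\nmid B$ forces the isogeny signature to be $(0)_{\tau\in G}$ or $(12)_{\tau\in G}$, and after replacing $E$ by the isogenous curve $E/C$ one may assume it is $(0)_{\tau\in G}$. Then $\lambda^{12}$ is unramified at all $\fp\mid p$ by definition of the signature, unramified at finite places away from $p$ by \cite[Propositions 1.4, 1.5]{DavidI}, and unramified at infinite places since the exponent $12$ is even; hence $\lambda^{12}$ is everywhere unramified and factors through the class group, so $\lambda$ has order dividing $12h$. Over the field $L$ cut out by $\lambda$, of degree $[L:\Q]\le 12dh$, the curve $E$ acquires a point of order $p$, and Merel's uniform boundedness theorem gives $p<(1+3^{[L:\Q]/2})^2\le(1+3^{6dh})^2$. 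Note that the exponent $6dh$ in the statement is exactly Merel's $[L:\Q]/2$; the fact that your resultant bound produces the same-looking quantity $(3^{d/2})^{12h}=3^{6dh}$ is a coincidence of exponents, not evidence that the two arguments are equivalent.
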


\section{Preliminaries}
We shall henceforth fix the following notation and assumptions.

\medskip
\begin{tabular}{lll}
$K$ & $\qquad$ & {a Galois number field,}\\
$d_K$ & & {the degree of $K/\Q$,}\\
$G_K$ & & $\Gal(\overline{K}/K)$,\\
$\fq$ & & {a finite prime of $K$},\\
$I_\fq$ & & {the inertia subgroup of $G_K$ corresponding to $\fq$},\\
$G$ & & $\Gal(K/\Q)$,\\
$p$ & & {a rational prime unramified in $K$ satisfying $p \ge 17$},\\
& & {or $p=11$},\\
$\chi_p$ & & {the mod $p$ cyclotomic character $G_K \rightarrow \F_p^*$},\\
$E$ &  & {an elliptic curve semistable at all places $\fp$ of $K$ above $p$}, \\
$\overline{\rho}_{E,p}$ & & {the mod $p$ representation associated
to $E$ as in \eqref{eqn:rho}}.
\end{tabular}

\bigskip

Suppose $\overline{\rho}_{E,p}$ is reducible. 
With an appropriate choice of basis for $E[p]$
we can write 
\begin{equation}\label{eqn:matrix}
\overline{\rho}_{E,p} \sim 
\begin{pmatrix}
\lambda & * \\
0 & \lambda^\prime
\end{pmatrix} \, ,
\end{equation}
where $\lambda$, $\lambda^\prime \, : \, G_K \rightarrow \F_p^*$ are characters. 
Thus $\lambda \lambda^\prime= \det(\overline{\rho}_{E,p})=\chi_p$. 
The character $\lambda$ is known as the \textbf{isogeny character} of $E[p]$.

As in the aforementioned works of Momose, Kraus and David,
our approach relies on controlling the ramification of the characters
$\lambda$, $\lambda^\prime$ at places above $p$.

\begin{prop} \textup{(David \cite[Propositions 1.2, 1.3]{DavidI})}\label{prop:exponents}
Suppose $\overline{\rho}_{E,p}$ is reducible and let $\lambda$, $\lambda^\prime$
be as above.
Let $\fp \mid p$ be a prime of $K$. 
Then
\[
\lambda^{12} \vert_{I_\fp}= \left(\chi_\fp \vert_{I_\fp} \right)^{s_\fp}
\]
where $s_\fp \in \{0,12\}$. 
\end{prop}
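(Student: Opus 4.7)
My plan is to analyze $\lambda|_{I_\fp}$ separately in the two cases allowed by semistability of $E$ at $\fp$, and show in each case that $\lambda|_{I_\fp}$ is either trivial or equals $\chi_p|_{I_\fp}$; raising to the $12$th power then yields the conclusion with $s_\fp \in \{0, 12\}$. The setup is just what is given: from reducibility, $\overline{\rho}_{E,p}$ has diagonal characters $\lambda, \lambda^\prime : G_K \to \F_p^*$ with $\lambda \lambda^\prime = \chi_p$, and we study the sub-line of $E[p]$ on which $G_K$ acts through $\lambda$.

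In the multiplicative reduction case, I would invoke the theory of the Tate curve. After a possible unramified quadratic twist reducing to the split case, $E/K_\fp$ admits a uniformisation $\mathbb{G}_m/q^{\Z}$ for some $q \in \fp$, so the $p$-torsion sits in a $G_{K_\fp}$-equivariant exact sequence
\[
0 \to \mu_p \to E[p] \to \Z/p\Z \to 0,
\]
on which $I_\fp$ acts by $\chi_p|_{I_\fp}$ and trivially respectively. Since the unramified twist is invisible to $I_\fp$, the character $\lambda|_{I_\fp}$ must coincide with one of these two characters.

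In the good reduction case, the line in $E[p]$ cut out by $\lambda$ corresponds to a finite flat closed subgroup scheme $C$ of order $p$ over $\OO_{K,\fp}$, obtained from the Néron model. Because $p$ is unramified in $K$, the absolute ramification index of $K_\fp/\Q_p$ is $1 < p-1$, so Raynaud's classification of finite flat group schemes of order $p$ applies: the character of $I_\fp$ acting on $C$ takes the form $\prod_{i=0}^{f-1} \psi_i^{a_i}$, where $\psi_0, \dots, \psi_{f-1}$ are the fundamental characters of level $f = [K_\fp : \Q_p]$ and $a_i \in \{0,1\}$. The crucial additional input is that $\lambda$ takes values in the small field $\F_p^*$ rather than the larger $\F_{p^f}^*$; since conjugation by Frobenius cyclically permutes the $\psi_i$ while fixing any $\F_p^*$-valued character, this Frobenius-invariance forces the exponents $a_i$ to be all equal. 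Hence $\lambda|_{I_\fp}$ is either trivial (all $a_i = 0$) or equals $\prod_i \psi_i = \chi_p|_{I_\fp}$ (all $a_i = 1$), and we are done. I expect the main obstacle to be this last step: correctly invoking Raynaud's theorem and then exploiting the $\F_p$-rationality of the global character $\lambda$ to collapse the level-$f$ fundamental character product down to a simple power of the cyclotomic character.
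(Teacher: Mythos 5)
Your proposal is correct in outline but takes a genuinely different route from the paper. The paper's proof is essentially a citation: it quotes David's Propositions 1.2 and 1.3, which give $s_\fp \in \{0,12\}$ when $\fp$ is of potentially multiplicative or potentially good ordinary reduction and $s_\fp \in \{4,6,8\}$ in the potentially good supersingular case, and then excludes the supersingular case by Serre's Proposition 12 (good supersingular reduction at an unramified $\fp$ forces the image of $\overline{\rho}_{E,p}$ to contain a non-split Cartan subgroup, contradicting reducibility). You instead reprove the local statements from scratch — the Tate curve in the multiplicative case, finite flat group schemes in the good case — which buys self-containedness and treats ordinary and supersingular good reduction uniformly, at the cost of heavier machinery. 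Two remarks on your good-reduction step. First, you can bypass fundamental characters entirely: the schematic closure $C$ is an Oort--Tate group scheme $G_{a,b}$ with $ab$ equal to $p$ times a unit, so the valuations of $a$ and $b$ sum to $e=1$; hence $C$ is either \'etale or of multiplicative type, and the inertia character is immediately trivial or $\chi_p\vert_{I_\fp}$. Second, your collapsing argument as phrased is not quite right: conjugation by a Frobenius element of $G_{K_\fp}$ acts \emph{trivially} on the level-$f$ fundamental characters, and $\lambda$, being a character of $G_K$ rather than of $G_\Q$, is not a priori invariant under conjugation by a Frobenius of $G_{\Q_p}$, which is what would permute them. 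What actually does the work is the $\F_p$-rationality alone: writing $\prod_i \psi_i^{a_i} = \psi_0^{\,a_0 + a_1 p + \cdots + a_{f-1}p^{f-1}}$ with $\psi_0$ of exact order $p^f-1$, the requirement that this character have order dividing $p-1$ forces $1+p+\cdots+p^{f-1}$ to divide $\sum_i a_i p^i$, which for $a_i \in \{0,1\}$ leaves only all $a_i=0$ or all $a_i=1$. With that step repaired (or replaced by the Oort--Tate dichotomy) your proof is complete.
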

\begin{proof}
Indeed, by \cite[Propositions 1.2, 1.3]{DavidI},
\begin{enumerate}
\item[(i)] if $\fp$ is a prime of potentially multiplicative reduction 
or potentially good ordinary reduction for $E$
then
$s_\fp=0$ or $s_\fp=12$;
\item[(ii)] if $\fp$ is a prime of potentially good supersingular reduction for $E$
then
$s_\fp=4$, $6$, $8$.
\end{enumerate}
However, we have assumed that $E$ is semistable at $\fp \mid p$ and that $p$
is unramified in $K$. By Serre \cite[Proposition 12]{Serre}, if $E$ has 
good supersingular reduction at $\fp$, then the image of $\overline{\rho}_{E,p}$
contains a non-split Cartan subgroup of $\GL_2(\F_p)$
and is therefore irreducible, contradicting the assumption that $\overline{\rho}_{E,p}$
is reducible. Hence $E$ has multiplicative or good ordinary reduction at $\fp$.
\end{proof}
\noindent \textbf{Remark.}
The order of $\chi_\fp \vert_{I_\fp}$ is $p-1$.
Hence the value of $s_\fp$ in the above proposition is well-defined
modulo $p-1$. Of course, since $0 \le s_\fp \le 12$, it follows for $p \ge 17$
that $s_\fp$ is unique.

\bigskip

As $K$ is Galois, $G$ acts transitively of $\fp \mid p$. Fix
$\fp_0 \mid p$. For each $\tau \in G$ write $s_\tau$ 
for the number $s_\fp$ associated to the ideal $\fp:=\tau^{-1} (\fp_0)$
by the previous proposition. We shall refer to $\bs:=(s_\tau)_{\tau \in G}$
as the \textbf{isogeny signature} of $E$ at $p$. The set $S:=\{0,12\}^G$
shall denote the set of all possible sequences of values $0$, $12$
indexed by elements of $G$.
For an element $\alpha \in K$, we define the \textbf{twisted norm associated
to $\bs \in S$} by
\[
\cN_\bs(\alpha)= \prod_{\tau \in G} \tau(\alpha)^{s_\tau}.
\]
\begin{prop} \textup{(David \cite[Proposition 2.6]{DavidI})}\label{prop:lam12}
Suppose $\overline{\rho}_{E,p}$ is reducible with isogeny character
$\lambda$, having isogeny signature $\bs \in S$.
Let $\alpha \in K$ be non-zero. Suppose $\upsilon_\fp(\alpha)=0$
for all $\fp \mid p$. Then
\[
\cN_\bs(\alpha)  
\equiv
\prod \left(\lambda^{12}(\sigma_\fq) \right)^{\upsilon_\fq(\alpha)}
\pmod{\fp_0},
\]
where the product is taken over all prime $\fq$ in the support of $\alpha$.
\end{prop}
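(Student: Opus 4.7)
The plan is to lift $\lambda^{12}$ to an idèle class character by global class field theory and apply the reciprocity law to the principal idèle attached to $\alpha$. Since $\lambda^{12}\colon G_K \to \F_p^*$ is continuous with finite image, Artin reciprocity produces a continuous character $\psi\colon \mathbb{A}_K^*/K^* \to \F_p^*$ with $\psi_\fq(\pi_\fq)=\lambda^{12}(\sigma_\fq)$ at every finite prime $\fq$ where $\lambda^{12}$ is unramified. Identifying each local factor $\psi_v(\alpha)$ and then invoking the product relation $\prod_v \psi_v(\alpha)=1$ will give, after reduction modulo $\fp_0$, the claimed congruence.

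The decisive local computation is at primes $\fp \mid p$. Proposition~\ref{prop:exponents} gives $\lambda^{12}|_{I_\fp}=(\chi_p|_{I_\fp})^{s_\fp}$, and the local reciprocity law for the mod $p$ cyclotomic character states that $\chi_p(\mathrm{rec}_\fp(u))\equiv \Norm_{K_\fp/\Q_p}(u)^{-1}\pmod p$ for $u\in \OO_\fp^*$. Since $\upsilon_\fp(\alpha)=0$ makes $\alpha$ a local unit at every $\fp \mid p$, this yields $\psi_\fp(\alpha)\equiv \Norm_{K_\fp/\Q_p}(\alpha)^{-s_\fp}\pmod p$. At primes $\fq\nmid p$ with good or multiplicative reduction, $\psi_\fq$ is unramified and contributes $\lambda^{12}(\sigma_\fq)^{\upsilon_\fq(\alpha)}$; for primes of additive reduction the image of $I_\fq$ in $\Aut(E[p])$ has order dividing $24$, which together with $p\geq 11$ forces $\lambda^{12}|_{I_\fq}$ to be trivial. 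At real places, $\psi_v$ factors through $\{\pm 1\}$, and one verifies that it is trivial.

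Substituting these local formulas into $\prod_v\psi_v(\alpha)=1$ and rearranging yields
\[
\prod_{\fq\nmid p}\lambda^{12}(\sigma_\fq)^{\upsilon_\fq(\alpha)}\equiv \prod_{\fp\mid p}\Norm_{K_\fp/\Q_p}(\alpha)^{s_\fp}\pmod{\fp_0}.
\]
It remains to recognize the right-hand side as $\cN_\bs(\alpha)$. Because $p$ is unramified in the Galois extension $K/\Q$, each decomposition group $D_\fp$ equals $\Gal(K_\fp/\Q_p)$, so $\Norm_{K_\fp/\Q_p}(\alpha)\equiv\prod_{\tau^{-1}(\fp_0)=\fp}\tau(\alpha)\pmod{\fp_0}$; grouping the factors of $\cN_\bs(\alpha)=\prod_{\tau\in G}\tau(\alpha)^{s_\tau}$ by the coset $\{\tau:\tau^{-1}(\fp_0)=\fp\}$ of $D_{\fp_0}$ then recovers the claim. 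The main obstacle is bookkeeping: tracking the sign conventions of the Artin map and the local cyclotomic character, and confirming that the archimedean factors really contribute trivially so that no extraneous scalar appears in the final congruence.
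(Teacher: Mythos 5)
The paper offers no proof of this proposition: it is quoted verbatim from David \cite[Proposition 2.6]{DavidI}, so there is nothing internal to compare against. Your argument is, however, exactly the standard route (and the one underlying David's and Momose's treatments): view $\lambda^{12}$ as an id\`ele class character via Artin reciprocity, apply the product formula to the principal id\`ele of $\alpha$, compute the local factors at $\fp \mid p$ using Proposition~\ref{prop:exponents} and the local description of $\chi_p$ on units as the inverse norm, and regroup $\prod_{\tau} \tau(\alpha)^{s_\tau}$ along cosets of the decomposition group to recover $\cN_\bs(\alpha)$ modulo $\fp_0$. With the arithmetic normalisation the signs do close up to give precisely the stated congruence, and the archimedean factors die because $\lambda(c)=\pm 1$ so $\lambda^{12}(c)=1$. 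So the structure of your proof is sound.

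One step is stated too loosely to be a proof as written: at a prime $\fq \nmid p$ of additive reduction you claim that the image of $I_\fq$ having order dividing $24$ forces $\lambda^{12}\vert_{I_\fq}$ to be trivial. That inference only gives $\lambda^{24}\vert_{I_\fq}=1$; an inertia image of order $8$ would leave $\lambda^{12}\vert_{I_\fq}$ of order $2$. The correct justification is that in the potentially multiplicative case $\lambda^{2}\vert_{I_\fq}=1$, while in the potentially good case the semistability defect $e$ lies in $\{1,2,3,4,6\}$ once $\overline{\rho}_{E,p}$ is assumed reducible (the values $e=8,12,24$ force a non-abelian, hence irreducible, action of $I_\fq$ on $E[p]$), and every such $e$ divides $12$. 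This is exactly the content of \cite[Propositions 1.4 and 1.5]{DavidI}, which the paper itself invokes later for the same purpose; citing it, or supplying the case analysis above, closes the gap. Note also that this unramifiedness is genuinely needed even to make sense of $\lambda^{12}(\sigma_\fq)$ for $\fq$ in the support of $\alpha$, since such $\fq$ need not be primes of good reduction.
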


\section{A bound in terms of a prime of potentially good reduction}

Let $\fq$ be a prime of potentially good reduction for $E$. 
Denote by $P_\fq(X)$ the characteristic polynomial of 
Frobenius for $E$ at $\fq$.

\begin{lem}\label{lem:good}
Let $\fq$ be a prime of potentially good reduction for $E$,
and suppose $\fq \nmid p$.
Let $r \ge 1$ be such that $\fq^r$ is principal, and write $\alpha \OO_K=\fq^r$.
Let $\bs=(s_\tau)_{\tau \in G}$ be the isogeny
signature of $E$ at $p$. Then
\[
\fp_0 \mid \Res(\,  P_\fq(X) \, ,\, X^{12r}-\cN_\bs(\alpha) \, ),
\]
where $\Res$ denotes the resultant.
\end{lem}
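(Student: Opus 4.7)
The plan combines Proposition \ref{prop:lam12}, applied to the element $\alpha$, with the description of Frobenius eigenvalues as roots of the characteristic polynomial $P_\fq(X)$, and then translates the resulting congruence into a divisibility of a resultant.

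Since $\alpha \OO_K = \fq^r$ and $\fq \nmid p$, we have $\upsilon_\fp(\alpha)=0$ for every $\fp \mid p$, so the hypothesis of Proposition \ref{prop:lam12} holds. The support of $\alpha$ consists of the single prime $\fq$, with $\upsilon_\fq(\alpha)=r$, so the proposition's conclusion collapses to the single congruence
\[
\cN_\bs(\alpha) \equiv \lambda^{12}(\sigma_\fq)^{r} \pmod{\fp_0}.
\]
Because $\overline{\rho}_{E,p}$ is reducible with diagonal characters $\lambda$, $\lambda^\prime$, and because $E$ has potentially good reduction at $\fq \nmid p$, the character $\lambda^{12}$ is unramified at $\fq$, and its value at a Frobenius lift $\sigma_\fq$ equals $\beta^{12}$ for some root $\beta \in \overline{\F}_p$ of the mod-$\fp_0$ reduction of $P_\fq(X)$. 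Raising to the $r$th power gives $\beta^{12r} \equiv \cN_\bs(\alpha) \pmod{\fp_0}$. Thus $\beta$ is a common root of $P_\fq(X)$ and $X^{12r}-\cN_\bs(\alpha)$ in $\overline{\F}_p$, which forces their resultant to lie in $\fp_0$, yielding the claim.

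The step I expect to require the most care is the identification $\lambda^{12}(\sigma_\fq)=\beta^{12}$ at a prime of merely potentially good (not necessarily good) reduction: one must verify that $\lambda^{12}$ is genuinely unramified at $\fq$, and that the twelfth power of the relevant Frobenius eigenvalue mod $\fp_0$ is independent of the choice of Frobenius lift. For good reduction this is immediate from the standard identification of the mod $p$ trace and determinant of Frobenius with $a_\fq(E) \bmod p$ and $\Norm(\fq) \bmod p$; the twelfth power appears in the statement precisely because it suffices to annihilate the tame inertia in the general potentially good case. Once this is in place, everything else is the formal translation of a common-root statement into resultant language.
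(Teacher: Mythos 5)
Your proof is correct and follows essentially the same route as the paper: apply Proposition~\ref{prop:lam12} to $\alpha$ with $\fq$ as the sole prime in its support to get $\lambda^{12}(\sigma_\fq)^{r}\equiv\cN_\bs(\alpha)\pmod{\fp_0}$, observe that $\lambda(\sigma_\fq)$ is a root of $P_\fq(X)$ modulo $\fp_0$ (the paper writes $P_\fq(X)\equiv(X-\lambda(\sigma_\fq))(X-\lambda^\prime(\sigma_\fq))\pmod{p}$ directly from~\eqref{eqn:matrix}), and conclude via the common root that $\fp_0$ divides the resultant. Your extra remarks on the unramifiedness of $\lambda^{12}$ at a prime of merely potentially good reduction address a point the paper glosses over, but do not change the argument.
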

\begin{proof}
From~\eqref{eqn:matrix},
it is clear that 
\[
P_\fq(X) \equiv (X - \lambda(\sigma_\fq)) (X-\lambda^\prime(\sigma_\fq)) \pmod{p}.
\]
%The lemma follows from Propositions~\ref{prop:roots} and~\ref{prop:lam12}.
Moreover, from Proposition~\ref{prop:lam12}, $\lambda(\sigma_\fq)$
is a root modulo $\fp_0$ of the polynomial
$X^{12 r} - \cN_\bs(\alpha)$. As $\fp_0 \mid p$, the lemma follows.
\end{proof}

We note the following surprising consequence.
\begin{cor}\label{cor:unitbound}
%Suppose $E$ is semistable at all $\fp \mid p$. 
Let $\epsilon$ be a unit 
of $\OO_K$. If the isogeny signature of $E$ at $p$ is $\bs$ then
$\cN_\bs(\epsilon) \equiv 1 \pmod{\fp_0}$.
\end{cor}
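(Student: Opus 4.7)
The plan is to obtain this as an immediate application of Proposition~\ref{prop:lam12} with $\alpha = \epsilon$. Since $\epsilon \in \OO_K^\times$, it is non-zero and satisfies $\upsilon_\fq(\epsilon) = 0$ for every finite prime $\fq$ of $K$, in particular for every $\fp \mid p$. The hypothesis of Proposition~\ref{prop:lam12} is therefore satisfied.

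Next I would observe that the support of $\epsilon$ (the set of primes at which $\epsilon$ has non-zero valuation) is empty. Consequently the product appearing on the right-hand side of the congruence in Proposition~\ref{prop:lam12} is an empty product and equals $1$. The congruence therefore collapses to
\[
\cN_\bs(\epsilon) \equiv 1 \pmod{\fp_0},
\]
which is exactly the statement to be proved.

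There is no real obstacle: the only thing to verify is that Proposition~\ref{prop:lam12} is stated in a way that permits an empty support (it is, since the hypothesis $\upsilon_\fp(\alpha) = 0$ for $\fp \mid p$ is the only constraint beyond $\alpha \ne 0$). No further ingredients are needed.
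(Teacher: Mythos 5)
Your proof is correct, and it takes a genuinely more direct route than the paper does. You apply Proposition~\ref{prop:lam12} once, with $\alpha=\epsilon$: a unit satisfies the hypotheses ($\alpha\ne 0$ and $\upsilon_\fp(\alpha)=0$ for all $\fp\mid p$), its support is empty, so the right-hand side is an empty product equal to $1$, giving $\cN_\bs(\epsilon)\equiv 1\pmod{\fp_0}$ immediately. The paper instead chooses an auxiliary prime $\fq\nmid p$ of good reduction, writes $\fq^{h}=\alpha\OO_K$ with $h$ the class number, applies Proposition~\ref{prop:lam12} to both $\alpha$ and $\epsilon\alpha$ (each with support $\{\fq\}$), and takes the ratio of the two congruences. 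Both arguments rest on the same key proposition; the difference is only that yours exploits the degenerate empty-support case, which you correctly note is permitted by the proposition as stated here, whereas the paper's ratio trick never invokes the empty-product convention and stays closer to the form in which the quoted result of David is usually applied (to generators of powers of prime ideals). What your version buys is brevity and independence from any auxiliary prime or the class number; what the paper's version buys is robustness against a narrower reading of the cited proposition, at the negligible cost of noting that a prime of good reduction away from $p$ exists. Either way the corollary is established.
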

\begin{proof}
Let $\fq \nmid p$ be any prime of good reduction of $E$. 
Let $h$ be the class number of $K$. 
Choose any $\alpha \in \OO_K$ so that $\alpha \OO_K=\fq^{h}$. 
By Proposition~\ref{prop:lam12},
\[
\cN_\bs(\alpha) \equiv (\lambda(\sigma_\fq))^{12 h} \pmod{\fp_0}.
\]
However, if $\epsilon$ is unit, then $\epsilon \alpha \OO_K=\fq^h$
too. So
\[
\cN_\bs(\epsilon \alpha) \equiv (\lambda(\sigma_\fq))^{12 h} \pmod{\fp_0}.
\]
Taking ratios we have $\cN_\bs(\epsilon) \equiv 1 \pmod{\fp_0}$.
\end{proof}

Corollary~\ref{cor:unitbound} is only useful in bounding $p$
for a given signature $\bs$, if there is some unit $\epsilon$ of $K$
such that $\cN_\bs(\epsilon) \ne 1$. Of course, if $\bs$
is either of the constant signatures $(0)_{\tau \in G}$
or $(12)_{\tau \in G}$ then 
\[
\cN_\bs(\epsilon)=(\Norm{\epsilon})^{\text{$0$ or $12$}}=1.
\]
Given a non-constant signature $\bs \in S$, does there
exist a unit $\epsilon$ such that $\cN_\bs(\epsilon) \ne 1$?
It is easy construct examples where the answer is no.
The following lemma gives a positive answer when $K$
is totally real.
\begin{lem}\label{lem:dirichlet}
Let $K$ be totally real of degree $d \ge 2$.
Suppose $\bs \ne (0)_{\tau \in G}$, $\ne (12)_{\tau \in G}$. 
Then there exists a unit $\epsilon$ of $K$
such that $\cN_\bs(\epsilon) \ne 1$. 
\end{lem}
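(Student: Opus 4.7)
The plan is to pass to the logarithmic embedding and exploit the fact that the unit lattice has full rank in the trace-zero hyperplane.

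First I would rewrite the twisted norm in a more useful form: since $s_\tau\in\{0,12\}$, setting $T=\{\tau\in G:s_\tau=12\}$, we have
\[
\cN_\bs(\epsilon)=\Bigl(\prod_{\tau\in T}\tau(\epsilon)\Bigr)^{12}.
\]
The hypothesis that $\bs$ is neither the all-zero nor all-$12$ sequence says exactly that $T$ is a proper nonempty subset of $G$. Since $K$ is totally real, every $\tau(\epsilon)$ is a real number, so in order to exhibit a unit $\epsilon$ with $\cN_\bs(\epsilon)\ne 1$ it suffices to produce $\epsilon$ with $\bigl|\prod_{\tau\in T}\tau(\epsilon)\bigr|\ne 1$, equivalently with $\sum_{\tau\in T}\log|\tau(\epsilon)|\ne 0$.

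Next I would invoke Dirichlet's unit theorem in its logarithmic form. The map $\epsilon\mapsto(\log|\tau(\epsilon)|)_{\tau\in G}$ sends $\OO_K^\times$ onto a lattice $\Lambda$ of full rank $d-1$ inside the hyperplane
\[
H=\Bigl\{(x_\tau)_{\tau\in G}\in\R^G:\sum_{\tau\in G}x_\tau=0\Bigr\}.
\]
Consider the linear functional $L:\R^G\to\R$ defined by $L(x)=\sum_{\tau\in T}x_\tau$. What I want is a unit $\epsilon$ with $L\bigl((\log|\tau(\epsilon)|)_\tau\bigr)\ne 0$, and since $\Lambda$ spans $H$, this is possible unless $L$ vanishes identically on $H$.

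The final step is to check that $L|_H\not\equiv 0$. A linear functional on $\R^G$ vanishes on $H$ if and only if it is a scalar multiple of $N(x)=\sum_{\tau\in G}x_\tau$. Writing $L$ in coordinates as the indicator function of $T$, this would force $T=\emptyset$ or $T=G$, which we have excluded. So $L|_H$ is not identically zero, hence nonzero on $\Lambda$, producing the desired unit. I do not foresee a real obstacle here; the only slightly delicate point is the reduction step that converts the multiplicative condition $\cN_\bs(\epsilon)\ne 1$ into a linear condition, which is where the totally real hypothesis (ensuring $\tau(\epsilon)\in\R$, so that $|\tau(\epsilon)|\ne 1$ suffices) is used.
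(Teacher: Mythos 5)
Your proof is correct and follows essentially the same route as the paper: both pass to the logarithmic (Dirichlet) embedding and observe that if $\cN_\bs(\epsilon)=1$ for all units, the full-rank unit lattice would lie in the proper subspace cut out by the functional $\sum_{\tau\in T}x_\tau$, which is impossible since $T$ is a proper nonempty subset of $G$. The only cosmetic difference is that you work in the trace-zero hyperplane of $\R^G$ while the paper uses the truncated embedding into $\R^{d-1}$.
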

\begin{proof}
Let $\tau_1,\ldots,\tau_d$ be the elements of $G$. 
Fix an embedding $\sigma: K \hookrightarrow \R$,
and denote $\sigma_i=\sigma \circ \tau_i$. 
Rearranging the elements of $G$, we may suppose 
that 
\[
s_{\tau_1}=\cdots=s_{\tau_r}=12,
\qquad
s_{\tau_{r+1}}=\cdots=s_{\tau_d}=0
\]
where $1 \le r \le d-1$. Suppose that $\cN_\bs(\epsilon)=1$
for all $\epsilon \in U(K)$, where $U(K)$ is the unit group of $K$.
Then the image of $U(K)$ under the Dirichlet embedding
\[
U(K)/\{\pm 1\} \hookrightarrow \R^{d-1},\qquad
\epsilon \mapsto  
(\log{ \lvert \sigma_1(\epsilon)} \rvert,\ldots,
\log{ \lvert \sigma_{d-1}(\epsilon) }\rvert)
\]
is contained in the hyperplane $x_1+x_2+\cdots+x_r=0$. 
This contradicts the fact the image must be a lattice in $\R^{d-1}$ of rank $d-1$.
\end{proof}

\section{Proof of Theorem~\ref{thm:main}}
We now prove Theorem~\ref{thm:main}. Suppose $\overline{\rho}_{E,p}$
is reducible and let $\bs$ be the isogeny signature. 
Let $A_\bs$ be as in \eqref{eqn:As}. By Corollary~\ref{cor:unitbound},
$p \mid A_\bs$. If $\bs \ne (0)_{\tau \in G}$, $(12)_{\tau \in G}$
then $A_\bs \ne 0$ by Lemma~\ref{lem:dirichlet}.
Now, suppose $p \nmid B$, where $B$ is as in the statement of Theorem~\ref{thm:main}.
Then $\bs =(0)_{\tau \in G}$, $(12)_{\tau \in G}$.

Suppose first that $\bs=(0)_{\tau \in G}$. Then $\cN_\bs(\alpha)=1$
for all $\alpha$. 
Let $\fq \nmid p$ be a prime of good reduction for $E$.
It follows from Lemma~\ref{lem:good} that $\fp_0$ divides
the resultant of $P_\fq(X)$ and $X^{12r}-1$. As both polynomials
have coefficients in $\Z$, the resultant belongs to $\Z$, and so
is divisible by $p$. This completes the proof for $\bs=(0)_{\tau \in G}$.

Finally, we deal with the case $\bs=(12)_{\tau \in G}$.
Let $C \subset E[p]$ be the subgroup of order $p$ corresponding to $\lambda$. 
Replacing $E$ by the isogenous curve $E^\prime=E/C$
has the effect of swapping $\lambda$ and $\lambda^\prime$
in \eqref{eqn:matrix}.
As $\lambda \lambda^\prime=\chi_p$,
the isogeny signature for $E^\prime$ at $p$
is $(0)_{\tau \in G}$.  The theorem follows.

\section{Proof of Theorem~\ref{thm:2}}
Suppose $\overline{\rho}_{E,p}$ is reducible
with signature $\bs$.
As in the proof of Theorem~\ref{thm:main}, we may suppose
$\bs=(0)_{\tau \in G}$ or $\bs=(12)_{\tau \in G}$. 
Moreover, replacing $E$ by an isogenous elliptic curve
we may suppose that $\bs=(0)_{\tau \in G}$.
By definition of $\bs$, we have $\lambda^{12}$
is unramified at all $\fp \mid p$.
As is well-known (see for example \cite[Proposition 1.4 and Proposition 1.5]{DavidI}),
$\lambda^{12}$ is unramified at the finite places outside $p$;
$\lambda^{12}$ is clearly unramified at the infinite places because of the
even exponent $12$. 
Thus ${\lambda}^{12}$ 
is everywhere unramified. 
Thus $\lambda$ has order dividing $12 \cdot h$, where $h$
is the class number of $K$. 
Let $L/K$ be the extension cut out by $\lambda$; 
this has degree dividing $12 \cdot h$.
Then $E/L$ has a point of order $p$. Applying Merel's bounds \cite{Merel},
we conclude that
\[
p < (1+3^{[L:\Q]/2})^2 \le (1+3^{6 d h})^2.
\]

\section{An Example: Frey Curves Attached to Fermat Equations of Signature $(13,13,p)$}

In \cite{DF2}, Dieulefait and Freitas, 
used the modular method to attack certain 
Fermat-type equations of the form $x^{13} + y^{13} = Cz^p$, for infinitely many values of $C$. 
They attach Frey curves (independent of $C$) over $\Q(\sqrt{13})$
to primitive solutions 
of these equations, and  prove irreducibility of the mod $p$ representations attached to these Frey curves,
for $p>7$ and $p \ne 13$, $37$ under the assumption that the isogeny signatures are $(0,0)$ or $(12,12)$. 
Here we improve on the argument by dealing with the isogeny signature $(0,12)$, $(12,0)$
and also by dealing with $p=37$.
More precisely, we prove the following.
\begin{thm}
\label{thm:13p}
 Let $d=3$, $5$, $7$ or $11$
and let $\gamma$ be an integer divisible only by primes
$\ell \not \equiv 1 \pmod{13}$. Let $p$ be a prime satisfying
$p\ge 17$ or $p=11$.
Let $(a,b,c) \in \Z^3$ satisfy 
\[
a^{13}+b^{13}=d \gamma c^p, \qquad \gcd(a,b)=1, \qquad abc \ne 0,\pm 1.
\]
Write $K = \Q(\sqrt{13})$; this has class number $1$.
Let $E = E_{(a,b)}/K$ be the Frey curve 
defined in \cite{DF2}. 
Then, the Galois representation $\overline{\rho}_{E,p}$ is irreducible.
\label{correction2}
\end{thm}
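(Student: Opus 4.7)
I plan to apply Theorem~\ref{thm:main} with $K = \Q(\sqrt{13})$, which is totally real Galois of degree $2$ and class number $1$, with fundamental unit $\epsilon = (3+\sqrt{13})/2$ of norm $-1$. The first step is to compute the invariant $B$. Writing $G = \{\mathrm{id},\sigma\}$, the only non-constant elements of $S$ are $\bs_1 = (s_{\mathrm{id}},s_\sigma) = (12,0)$ and $\bs_2 = (0,12)$, for which $\cN_{\bs_1}(\epsilon) = \epsilon^{12}$ and $\cN_{\bs_2}(\epsilon) = \sigma(\epsilon)^{12}$. Since $\sigma(\epsilon) = -\epsilon^{-1}$, one has $\Norm(\epsilon^{12}-1) = 2 - \mathrm{Tr}(\epsilon^{12})$, and a direct expansion gives $\epsilon^{12} = 842401 + 233640\sqrt{13}$. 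Therefore
\[
B = A_{\bs_1} = A_{\bs_2} = \lvert \Norm(\epsilon^{12}-1) \rvert = 1684800 = 2^6 \cdot 3^4 \cdot 5^2 \cdot 13,
\]
so $p \nmid B$ for every $p$ as in the statement (and each such $p$ is unramified in $K$, since $p \neq 13$). By the proof of Theorem~\ref{thm:main} this already eliminates the non-constant isogeny signatures $(0,12)$ and $(12,0)$; this is one of the two improvements over \cite{DF2}.

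It then remains to rule out reducibility when $\bs$ is constant. After replacing $E$ by its quotient by the isogeny-character subgroup if necessary, we may assume $\bs = (0,0)$. Granted semistability of $E$ at every $\fp \mid p$ (which one reads off from the reduction analysis of the Frey curve $E_{(a,b)}$ in \cite{DF2}, using $p \geq 11$ together with the hypothesis that the prime divisors of $\gamma$ are $\not\equiv 1 \pmod{13}$), Theorem~\ref{thm:main} yields
\[
p \mid \Res\bigl(P_\fq(X),\, X^{12}-1\bigr)
\]
for every prime $\fq \nmid p$ of good reduction for $E$, with $r=1$ because $h_K = 1$. It therefore suffices to exhibit, for each putative solution, such a $\fq$ for which $p \nmid \Res(P_\fq(X),X^{12}-1)$.

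The main obstacle is $p = 37$: the field $\F_{37}$ contains all $12$-th roots of unity (since $12 \mid 36$), so there are Hasse-admissible values of $a_\fq$ for which $37 \mid \Res(P_\fq(X),X^{12}-1)$, and the single prime $\fq$ used in \cite{DF2} fell into this trap. The plan is to use two good-reduction primes $\fq_1$, $\fq_2$ lying above distinct rational primes (for instance the primes above $3$ and $17$, both of which split in $K$), and to verify by a short case analysis on the reductions of $(a,b)$ modulo those rational primes that, for every putative solution, at least one of the two resultants $\Res(P_{\fq_i}(X),X^{12}-1)$ is coprime to $37$. For the remaining values of $p$ (namely $p = 11$ and $p \geq 17$ with $p \neq 37$), an analogous computation with a single $\fq$ of good reduction suffices, and in fact is already essentially carried out in \cite{DF2}. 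Combining all the cases yields the claimed irreducibility.
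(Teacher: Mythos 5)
Your proposal is correct and follows essentially the same route as the paper: you eliminate the non-constant signatures via the unit computation $\lvert\Norm(\epsilon^{12}-1)\rvert = 2^6\cdot 3^4\cdot 5^2\cdot 13$ (exactly the paper's value), reduce to signature $(0,0)$ by passing to the isogenous curve, and then apply the resultant criterion at split good-reduction primes above $3$ and $17$, exploiting that $a_\fq(E_{(a,b)})$ depends only on $(a,b)$ modulo $q$ to make the check finite and to dispose of the troublesome case $p=37$. The paper carries out the same finite computation, packaged as quantities $R_3$ and $R_{17}$ formed from gcds of resultants over the two primes above each rational prime.
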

\begin{proof} 
Suppose $\overline{\rho}_{E,p}$ is reducible.
For a quadratic field such as $K$, the set $S$ of 
possible isogeny signatures $(s_\tau)_{\tau \in G}$ is 
\[
S=\{(12,12), (12,0), (0,12), (0,0)\}.
\] 
Note that $(13) = (\sqrt{13})^2$ is the only prime ramifying in $K$. 
In \cite{DF2} it is shown that the curves $E$ have 
additive reduction only at $2$ and $\sqrt{13}$. 
Moreover, $E$ has
good reduction at all primes $\fq \nmid 26$ 
above rational primes $q \not \equiv 1 \pmod{13}$.
Furthermore, the trace $a_\fq(E_{(a,b)})$ depends
only on the values of $a$, $b$ modulo $q$. 
By the assumption $\gcd(a,b)=1$, we have $(a,b) \not \equiv 0 \pmod{q}$.

The fundamental unit of $K$ is $\epsilon=(3+\sqrt{13})/2$. Then
\[
\Norm(\epsilon^{12}-1)=-2^{6} \cdot 3^4 \cdot 5^2 \cdot 13.
\]
As $p\ge 17$ or $p=11$, it follows from Corollary~\ref{cor:unitbound}
that the isogeny signature of $E$ at $p$
is either $(0,0)$ or $(12,12)$. As in the proof of Theorem~\ref{thm:main},
we may suppose that the isogeny signature is $(0,0)$.

Let $q$ be a rational prime $\not \equiv 1 \pmod{13}$ that splits
as $(q)= \fq_1 \cdot \fq_2$ in $K$. By the above, $\fq_1$, $\fq_2$
must be primes of good reduction. The trace $a_{\fq_i}(E_{(a,b)})$
depends only on the values of $a$, $b$ modulo $q$.
For each non-zero pair $(a,b) \pmod{q}$, let 
\begin{equation}
P_{\fq_1}^{(a,b)}(X)=X^2 - a_{\fq_1}(E_{(a,b)})X + q \qquad 
P_{\fq_2}^{(a,b)}(X)=x^2 - a_{\fq_2}(E_{(a,b)})x + q, 
\label{polys}
\end{equation}
be the characteristic polynomials of Frobenius
at $\fq_1$, $\fq_2$. Let
\[
R_q^{a,b}=\gcd(\, \Res(P_{\fq_1}^{a,b}(X),X^{12}-1)\, ,\, \Res(P_{\fq_2}^{a,b}(X),X^{12}-1)\, ).
\]
Let
\[
R_q=\lcm \{ R_q^{a,b} \; : 0 \le a,b \le q-1, \qquad (a,b) \ne (0,0)  \}.
\]
By the proof of Theorem~\ref{thm:main},
we have that $p$ divides 
$R_q$. Using a short {\tt SAGE} script we computed the values of $R_q$ for 
$q=3$, $17$. We have
\[
R_3=
2^6 \cdot 3^2 \cdot 5^2 \cdot 37,
\qquad
R_{17}=
2^8 \cdot 3^4 \cdot 5^2 \cdot 7^2 \cdot 13^2 \cdot 19 \cdot 23 \cdot 53 \cdot 
97 \cdot 281 \cdot 21481 \cdot 22777.
\]
As $p \ge 17$ or $p=11$ we see that $\overline{\rho}_{E,p}$ is irreducible.
\end{proof}

We will now use the improved irreducibility result 
(Theorem~\ref{thm:13p})
to correctly restate Theorem~1.3 in \cite{DF2}. 
Furthermore, we will also add an argument using the primes above $17$ that actually allows us to improve it. More precisely, we will prove.

\begin{thm} Let $d=3$, $5$, $7$ or $11$
and let $\gamma$ be an integer divisible only by primes
$\ell \not \equiv 1 \pmod{13}$. Let also $\mathcal{L} := \{2, 3, 5, 7, 11, 13, 19, 23, 29, 71\}.$ 

If $p$ is a prime not belonging to $\mathcal{L}$, then: 
\begin{itemize}
\item[(I)] The equation $x^{13} + y^{13} = d\gamma z^p$ has no solutions $(a,b,c)$ such that 
\[
\gcd(a,b)=1, \qquad abc \ne 0,\pm 1 \qquad \text{ and } \qquad 13 \nmid c. 
\]
\item[(II)] The equation $x^{26} + y^{26} = 10\gamma z^p$ has no solutions $(a,b,c)$ such that
\[
\gcd(a,b)=1, \qquad \text{and} \qquad abc \ne 0,\pm 1. 
\]
\end{itemize}
\end{thm}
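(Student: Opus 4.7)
The plan is to follow the modular approach of \cite{DF2}, now fed with the stronger irreducibility statement of Theorem~\ref{thm:13p} in place of its weaker predecessor, and to strengthen the final Frobenius-trace comparison by bringing in the primes above $17$.

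For part (I), I would attach to a primitive solution $(a,b,c)$ with $13 \nmid c$ the Frey curve $E = E_{(a,b)}$ over $K = \Q(\sqrt{13})$ constructed in \cite{DF2}. Modularity of $E$ and the corresponding level-lowering statement --- namely that $\overline{\rho}_{E,p}$ arises from a Hilbert newform $\mathfrak{f}$ of an explicit level $\mathfrak{n}$ supported on $\{2, \sqrt{13}\}$ --- were already established in \cite{DF2}, whose small list of candidate newforms is to be reused here. The key new input is that Theorem~\ref{thm:13p} now supplies the irreducibility hypothesis for every $p \ge 17$ or $p = 11$, removing in particular the problematic value $p = 37$ of \cite{DF2} and the restriction to constant isogeny signatures.

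For each such $\mathfrak{f}$, each rational prime $q \not\equiv 1 \pmod{13}$ that splits in $K$ as $(q) = \fq_1 \fq_2$, and each admissible pair $(a,b) \pmod q$, the congruences $a_{\fq_i}(E_{(a,b)}) \equiv a_{\fq_i}(\mathfrak{f}) \pmod{\mathfrak{P}}$ force $p$ to divide a computable integer $T_q^{(a,b)}$; the $\lcm$ over $(a,b)$ and a suitable $\gcd$ over the two primes above $q$ produces an integer $T_q$ whose prime divisors contain every admissible $p$. The paper \cite{DF2} carried this out at $q = 3$; the improvement here is to additionally compute $T_{17}$, which makes sense because $8^2 \equiv 13 \pmod{17}$ so $17$ splits in $K$ and $17 \not\equiv 1 \pmod{13}$. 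Intersecting these constraints with the output of Theorem~\ref{thm:13p} is expected to yield exactly the exclusion set $\mathcal{L}$.

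For part (II), the substitution $(X,Y) = (a^2, b^2)$ converts a solution of $x^{26} + y^{26} = 10 \gamma z^p$ into a solution of $X^{13} + Y^{13} = 5 \cdot (2\gamma)\, c^p$ with $\gcd(X,Y) = 1$ and $2\gamma$ divisible only by primes $\not\equiv 1 \pmod{13}$; when $13 \nmid c$, part~(I) applies directly. The subcase $13 \mid c$ lies outside the hypothesis of (I) and is the main obstacle: reducing modulo $13$ and using $t^{26} \equiv t^2 \pmod{13}$ gives $a^2 + b^2 \equiv 0 \pmod{13}$, and one would need to exploit the reduction behaviour of $E_{(a^2, b^2)}$ at $\sqrt{13}$ --- either to rule out this congruence directly or to rerun level lowering with an adjusted conductor exponent at $\sqrt{13}$ --- to close the case. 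Beyond this, the principal remaining technical burden is the Hilbert modular forms computation at the primes above $17$, for which Magma is the natural tool.
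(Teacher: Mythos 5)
Your treatment of part (I) is essentially the paper's proof: reuse the modularity and level-lowering of \cite{DF2} to land on the newforms in $S_2(2^i w^2)$, $i=3,4$, substitute Theorem~\ref{thm:13p} for the weaker irreducibility statement of \cite{DF2}, eliminate the rational newforms as before, and eliminate the remaining forms by trace comparisons at the primes above $3$ (giving the set $\mathcal{P}$ already computed in \cite{DF2}) and, as the new ingredient, at the two primes above $17$ (giving a set $\mathcal{P}^\prime$), concluding since $\mathcal{P}\cap\mathcal{P}^\prime=\mathcal{L}$. Your observation that $17$ splits in $\Q(\sqrt{13})$ and satisfies $17\not\equiv 1\pmod{13}$ is exactly what makes this admissible.

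Part (II) is where there is a genuine gap. The paper disposes of it in one line --- ``Part (II) follows exactly as in \cite{DF2}'' --- because the argument of \emph{loc.\ cit.}\ for the equation $x^{26}+y^{26}=10\gamma z^p$ already covers the case $13\mid c$ and is unaffected by the corrections being made here. Your route instead reduces (II) to (I) via $(X,Y)=(a^2,b^2)$, which is fine when $13\nmid c$, but you then leave the case $13\mid c$ open, and the congruence you propose cannot close it: reducing modulo $13$ gives $a^2+b^2\equiv 0\pmod{13}$, and since $-1\equiv 5^2\pmod{13}$ is a quadratic residue, this congruence has plenty of solutions with $13\nmid ab$, so no contradiction follows. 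The actual point in \cite{DF2} is that the level-lowering and newform elimination for the $(26,26,p)$ Frey curve can be carried out even when $13\mid c$ (the extra structure coming from $a$, $b$ being squares is what rescues the argument there), and you would need to import or reprove that analysis rather than rely on the mod $13$ reduction. As written, your proof of part (II) is incomplete precisely in the subcase that distinguishes (II) from (I).
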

\begin{proof} Suppose there is a solution $(a,b,c)$, satisfying $\gcd(a,b)=1$, to the equation in part (I) of the theorem for $p \geq 17$ or $p=11$. Let $E = E_{(a,b)}$ be the Frey curves attached to it in \cite{DF2}. As explained in \cite{DF2}, but now using Theorem~\ref{thm:13p} above instead of Theorem 4.1 in \textit{loc. cit.}, we obtain an isomorphism 
\begin{equation}
\bar{\rho}_{E,p} \sim \bar{\rho}_{f,\fp}, 
\label{correction}
\end{equation}
where $\fp \mid p$ and $f \in S_2 (2^{i}w^{2})$ for $i=3,4$. In \textit{loc. cit} the newforms are divided into the sets
\begin{itemize}
 \item [S1:] The newforms in $S_2 (2^{i}w^{2})$ for $i=3,4$ such that $\Q_f = \Q$,
 \item [S2:] The newforms in the same levels with $\Q_f$ strictly containing $\Q$.
\end{itemize}
We eliminate the newforms in S1 with the same argument as in \cite{DF2}. Suppose now that isomorphism \eqref{correction} holds with a form in S2. Also in \cite{DF2}, using the primes dividing 3, a contradiction is obtained if we assume that 
\[ 
p \not\in \mathcal{P} = \{2, 3, 5, 7, 11, 13, 19, 23, 29, 71, 191, 251, 439, 1511, 13649\}.
\]
Going through analogous computations, 
using the two primes dividing $17$, 
gives a contradiction if $p$ does not belong to 
\begin{multline*}
\mathcal{P}^\prime = \{ 
2, 3, 5, 7, 11, 13, 17, 19, 23, 29, 37, 41, 43, 47, 59, 71, 73, 79, 83, 89, 
109,\\ 113, 157, 167, 197, 227, 229, 239, 281, 359, 431, 461, 541, 1429, 5237, 
253273,\\ 271499, 609979, 6125701, 93797731, 530547937, 733958569, 6075773983,\\
11740264873 
\}.
\end{multline*}
Thus, we have a contradiction as long as $p$ is not in the intersection
\[
\mathcal{P} \cap \mathcal{P}^\prime = \{ 2, 3, 5, 7, 11, 13, 19, 23, 29, 71\}. 
\]
Thus part (I) of the theorem follows. Part (II) follows exactly as in \cite{DF2}
\end{proof}

\end{document}